\DeclareMathAlphabet{\mymathbb}{U}{bbold}{m}{n}
\newcommand{\RR}{\mathbb{R}}
\newcommand{\ZZ}{\ts\mathbb{Z}}
\newcommand{\QQ}{\mathbb{Q}}
\newcommand{\CC}{\mathbb{C}}  
\newcommand{\cF}{\mathcal{F}}
\newcommand{\cO}{\mathcal{O}}
\newcommand{\Rank}{\mathrm{Rank}}
\newcommand{\Ret}{\mathrm{Ret}}
\newcommand{\ts}{\hspace{0.5pt}}
\newtheorem{theorem}{Theorem}
\newtheorem{cor}[theorem]{Corollary}
\theoremstyle{definition}
\newtheorem{example}[theorem]{Example}
\title{How big is a tiling's return module?}
\author{Abigail Perryman}
\address{Department of Mathematics, University of Texas, \newline
  \indent 2515 Speedway, PMA 8.100 Austin, TX 78712, USA}
\email{abbyperryman@utexas.edu}
\author{Lorenzo Sadun}
\address{Department of Mathematics, Univeristy of Texas, \newline
  \indent 2515 Speedway, PMA 8.100 Austin, TX 78712, USA}
\email{sadun@math.utexas.edu}
\begin{document}

\begin{abstract}
The rank of a tiling's return module depends on the geometry of its tiles and is not a topological invariant. However, the rank of the
first \v Cech cohomology $\check H^1(\Omega)$ gives upper and lower bounds for the size of the 
return module. For all sufficiently large patches, the rank of the 
return module is at most the same as the rank of the cohomology. 
For a generic choice of tile shapes and an arbitrary reference patch,
the rank of the return module is at least the rank of $\check H^1(\Omega)$.
Therefore, for generic tile shapes and sufficiently large patches,
the rank of the return module is equal to the rank of $\check H^1(\Omega)$. 
\end{abstract}

\keywords{Tiling cohomology, Tiling dynamics, return module, spectra}
\subjclass[2020]{37B52, also 37C79, 52C22, 52C23, 55N05}

\maketitle

\section{Introduction and results}\label{sec:intro}

A basic objective in tiling theory is understanding which properties of
a tiling are topological (depending only on the tiling space up to
homeomorphism), which are combinatorial (depending on which tiles 
touch which other tiles), and which are geometric (depending on the
shapes and sizes of the individual tiles). In this paper we relate 
a geometric object, namely the rank of the return module for
large patches $P$, to a topological object, namely the rank of the 
first \v Cech cohomology of the associated tiling space. 

Let $T$ be a tiling satisfying some basic axioms (see Section \ref{sec:defs}) and let $P$ be a patch. That is, $P$ is 
a finite set of tiles in $T$ whose relative positions are fixed. The
patch $P$ will appear in many different places in $T$. The
position of one occurrence of $P$ relative to another is called a 
{\em return vector} of $P$. The additive group generated by the return
vectors of $P$ is called the {\em return module} of $P$ and is 
denoted $\Ret(P)$. The {\em rank} of $\Ret(P)$ is the maximal number 
of return vectors that are linearly independent over the integers. 
Equivalently, it is the dimension of $\Ret(P) \otimes \QQ$ as a vector
space over the rational numbers $\QQ$. 

If $P'$ is a patch that contains $P$, then every return vector of 
$P'$ is also a return vector of $P$, so the rank of $\Ret(P')$ is 
at most equal to the rank of $P$. For any given tiling, there is a 
limiting rank that applies to the return modules of all sufficiently large patches. 

If $T$ is any tiling, then the {\em tiling space} $\Omega_T$ 
associated with $T$, also called the {\em continuous hull} of $T$, 
is the closure of the orbit of $T$ under translations in the 
``big ball'' metric
where two tilings are considered close if they agree on a big ball
around the origin up to a small translation. There are many
cohomology theories associated with $T$ and $\Omega_T$. For our 
purposes, the most useful is the {\em pattern equivariant} cohomology
\cite{Kel,KP} of $T$, which is isomorphic to the \v Cech cohomology 
$\check H^*(\Omega_T)$. The rank of $\check H^1(\Omega_T)$ is the 
dimension of $\check H^1(\Omega_T, \QQ) = \check H^1(\Omega) \otimes \QQ$
as a vector space over $\QQ$. 

The first of our two main theorems bounds the rank of the return
module of a large patch by the rank of the cohomology.

\begin{theorem}\label{thm:main1}
Let $T$ be an aperiodic tiling that is repetitive and has finite local complexity.
Then for any sufficiently large patch $P$, 
\[ \Rank(\Ret(P)) \le \Rank (\check H^1(\Omega_T)). \]
\end{theorem}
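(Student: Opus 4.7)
The plan is to show that, for $P$ sufficiently large, $\Ret(P)\otimes\QQ$ has stabilized to a ``limiting'' subspace $\Ret_\infty\otimes\QQ$, and that this subspace embeds into $\check H_1(\Omega_T,\QQ)$, whose $\QQ$-dimension equals $\Rank(\check H^1(\Omega_T))$ by the duality between inverse and direct limits of finite-dimensional vector spaces.

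I would first invoke the Anderson--Putnam presentation $\Omega_T = \varprojlim_n \vL_n$ of $\Omega_T$ as an inverse limit of finite CW complexes, giving $\check H^1(\Omega_T)\otimes\QQ = \varinjlim_n H^1(\vL_n,\QQ)$ and, with $\lim^1$ vanishing rationally, $\check H_1(\Omega_T,\QQ) = \varprojlim_n H_1(\vL_n,\QQ)$. Since $\Ret(P')\subseteq \Ret(P)$ whenever $P'\supseteq P$, the integer $\Rank(\Ret(P))$ is non-increasing in $P$ and eventually constant, which forces the nested $\QQ$-subspaces $\Ret(P')\otimes\QQ \subseteq \RR^d$ to stabilize to a common subspace $\Ret_\infty\otimes\QQ$. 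For $P$ in this stable range, $\Rank(\Ret(P)) = \dim_\QQ(\Ret_\infty\otimes\QQ)$, and it suffices to bound the latter.

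For each $n$ and each return vector $v$ of the patch $P_n$ underlying $\vL_n$, I would define $\Phi_n(v) \in H_1(\vL_n, \ZZ)$ as the homology class of the projection to $\vL_n$ of any path in $\RR^d$ from a copy of $P_n$ at $x_0 \in T$ to the copy at $x_0 + v$. Both endpoints project to the $0$-cell labelled $P_n$, so the projection is a loop, and the class is independent of path and base copy by contractibility of $\RR^d$. The key observation is that there is a natural ``displacement'' map $H_1(\vL_n, \QQ) \to \RR^d$, well-defined because every $2$-cell of $\vL_n$ is a closed tile whose cellular boundary has zero total $\RR^d$-displacement; under this map, $\Phi_n(v) \mapsto v$. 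It follows that each $\Phi_n$ is injective after tensoring with $\QQ$. Moreover, for $v \in \Ret_\infty\otimes\QQ$ the family $\{\Phi_n(v)\}_n$ is compatible with the bonding-induced pushforwards $H_1(\vL_m, \QQ) \to H_1(\vL_n, \QQ)$, since both sides arise from the same projection $T \to \vL_n$. This yields an injection $\Phi_\infty \colon \Ret_\infty\otimes\QQ \hookrightarrow \check H_1(\Omega_T, \QQ)$, and combined with $\dim_\QQ \check H_1(\Omega_T, \QQ) = \Rank(\check H^1(\Omega_T))$ gives the bound.

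The main technical obstacle I expect is verifying that the Anderson--Putnam inverse system behaves cleanly enough for the argument to go through: cofinality of the chosen tower in the relevant sense, rational vanishing of $\lim^1$, and the fact that an element ``$v \in \Ret_\infty \otimes \QQ$'' really does give well-defined rational classes $\Phi_n(v) \in H_1(\vL_n, \QQ)$ at every level $n$ (so that the inverse-limit class $\Phi_\infty(v)$ is honestly defined). The aperiodicity, repetitivity, and FLC hypotheses enter primarily in setting up this cofinal tower $\{\vL_n\}$ on which the entire argument rests.
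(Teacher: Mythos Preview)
Your strategy is dual to the paper's: you try to inject $\Ret_\infty\otimes\QQ$ into \v Cech \emph{homology} via the Anderson--Putnam tower, while the paper injects the dual $\mathrm{Hom}(\Ret(P),\ZZ)$ into pattern-equivariant \emph{cohomology}. The overall shape (stabilize the rank, then embed into a topological invariant) is the same, but the execution on your side has a real gap that the paper's cohomological route sidesteps.

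The problem is that your $\Phi_n$ is not a group homomorphism. You define $\Phi_n(v)$ as the class of the projected path from $x_0$ to $x_0+v$, which makes sense only for $v$ in the \emph{set} $X_n-x_0$ of displacements to other copies of $P_n$. Extending this to the group $\Ret(P_n)$ would require that whenever $\sum q_i(x_i-x_0)=0$ in $\RR^d$, also $\sum q_i\Phi_n(x_i-x_0)=0$ in $H_1(\vL_n,\QQ)$. This is false in general: already for Thue--Morse with unit tile lengths, the edge from $x_0$ to $x_0+1$ may be an $a$-tile while the edge from $x_0+1$ to $x_0+2$ is a $b$-tile, so $\Phi_1(2)=[a]+[b]\neq 2[a]=2\Phi_1(1)$ in $H_1(\vL_1)$. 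Consequently ``$\Phi_n$ tensored with $\QQ$'' is not defined, the compatible family $\{\Phi_n(v)\}_n$ does not exist as stated, and $\Phi_\infty$ is never constructed. You correctly flag this as the main obstacle, but you do not resolve it.

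The paper avoids this entirely by working contravariantly. A linear functional $L:\Ret(P)\to\ZZ$ is additive by definition, and evaluating it on displacements produces a closed PE $1$-cochain $\alpha_L$ with no well-definedness issue. The injectivity of $L\mapsto[\alpha_L]$ then uses exactly the rank-stability you also invoke: if $[\alpha_L]=0$ then $\alpha_L=\delta f$ with $f$ PE of some radius $r$, so $L$ vanishes on $\Ret(P')$ for any $P'$ containing an $r$-ball; since $P$ already has minimal rank, $\Ret(P')$ is full-rank in $\Ret(P)$ and $L=0$. Your approach is salvageable---for instance, one can bound $\Rank(\Ret(P_m))$ by the rank of the bonding map $H_1(\vL_m,\QQ)\to H_1(\vL_n,\QQ)$, which eventually equals $\ell$ when $\check H^1$ is finite-rank, using only that $\mathrm{image}(D_m)\supseteq\Ret(P_m)\otimes\QQ$---but this is not the argument you wrote, and it bypasses $\Phi_\infty$ altogether.
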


Note that this theorem only bounds the rank of $\Ret(P)$ for 
{\em large} patches. This is what matters, as the return module for 
large patches determines many of the dynamical properties of a tiling,
such as its diffraction spectrum. The return module for small patches
does not have any dynamical significance and can often be changed by 
adding local markings to a tiling. See Section \ref{sec:examples}
for examples of this phenomenon. 

Before stating our second main theorem, we must introduce the concept
of shape changes for tilings. It may happen that two tilings $T$ and 
$T'$ have identical combinatorics, with each tile in $T$ having 
a corresponding tile in $T'$ and with two tiles in $T$ touching if
and only if the corresponding tiles in $T'$ touch. However, the
shapes and sizes of the tiles in $T'$ may be different from those
in $T$. In that case, we say that $T'$ is obtained by applying a 
shape change to $T$. 

For instance, suppose that $T$ is a Thue-Morse 
tiling of the line by two
kinds of tiles, called $a$ and $b$, following the sequence 
$\ldots abbabaabbaababba \ldots$. Suppose furthermore 
that the $a$ and $b$ tiles both have length 1. Now let $T'$ be a tiling
by tiles $A$ and $B$ following the analogous 
pattern $\ldots ABBABAABBAABABBA 
\ldots$, only with an $A$ tile having length $\pi/4$ and a $B$ tile 
having length $\sqrt{2}$. The tilings $T$ and $T'$ have identical 
combinatorics but different geometry. Consequently, the spaces $\Omega_T$ and 
$\Omega_{T'}$ are homeomorphic, but differ as dynamical systems. The return modules of a patch $P \subset T$ and a corresponding patch
$P' \subset T'$ do not necessarily have the same rank. 

The shape changes to a tiling $T$ of $\RR^d$, modulo a form of equivalence called
mutual local derivability (MLD), are parametrized by an open subset of
$\check H^1(\Omega_T) \otimes \RR^d$ \cite{CS1}. 
We say that a property of tilings in this family is {\em generic} if
it occurs for all shapes except for a set of measure zero. (The concept of 
measure zero is clear when $\check H^1$ has finite rank, as we 
are then dealing with a Euclidean space. If $\check H^1$ has infinite rank,
then the set of possible shape changes is a union of finite
dimensional spaces. In that case, we say that a property is 
generic if it applies on a set of full measure on each of these spaces.)

Our second main theorem says that generic shape changes result in
the largest possible return modules. 

\begin{theorem}\label{thm:main2} Let $T$ be an aperiodic tiling that is 
repetitive and has finite local complexity and suppose that $\ell = 
\Rank(\check H^1(\Omega_T))$. Then, after applying a generic shape
change, the rank of every return module is at least $\ell$. 
\end{theorem}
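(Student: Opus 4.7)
The plan is to identify $P$-return vectors with periods of a displacement homomorphism on $H_1$ of an Anderson--Putnam approximant, identify shape changes with additive shifts of that homomorphism by a cohomology class, and then argue by measure-theoretic genericity in a finite-dimensional parameter space.

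First, since $\Ret(P')\subset\Ret(P)$ whenever $P\subset P'$, it suffices to establish the lower bound for arbitrarily large patches $P$. Assuming initially $\ell<\infty$, write $\Omega_T=\varprojlim\Gamma_n$ via the Anderson--Putnam construction, and pick $n$ large enough that $H^1(\Gamma_n;\QQ)\to\check H^1(\Omega_T;\QQ)$ is an isomorphism, so that $H_1(\Gamma_n)$ has rank $\ell$. Choose $P$ large enough that its occurrence set $\Lambda_P\subset\RR^d$ coincides with $f_T^{-1}(v_P)$ for a single vertex $v_P\in\Gamma_n$, where $f_T\colon\RR^d\to\Gamma_n$ is the natural projection. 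Every loop in $\Gamma_n$ based at $v_P$ then lifts (through the branching) to a path in $\RR^d$ starting at $0$ and ending at some $v\in\Lambda_P$, and this endpoint depends only on the homology class of the loop; this yields a displacement homomorphism $D\colon H_1(\Gamma_n)\to\RR^d$ whose image is exactly $\Ret(P)$.

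A shape change with parameter $\alpha\in\check H^1(\Omega_T)\otimes\RR^d$ restricts to a class in $H^1(\Gamma_n;\RR^d)$; in the deformed tiling $T_\alpha$, the combinatorial identification of $P$-occurrences with loops at $v_P$ is unchanged, but the displacement homomorphism becomes $D_\alpha\defeq D+\alpha$, so $\Ret(P)_\alpha = D_\alpha(H_1(\Gamma_n))$. Fix a $\ZZ$-basis $g_1,\dots,g_\ell$ of the free part of $H_1(\Gamma_n)$. The evaluation
\[ H^1(\Gamma_n;\RR^d)\;\longrightarrow\;(\RR^d)^\ell,\qquad \alpha\mapsto\bigl(\alpha(g_1),\dots,\alpha(g_\ell)\bigr), \]
is an $\RR$-linear isomorphism, so $\alpha$ can be regarded as an arbitrary point of $(\RR^d)^\ell$. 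The set of $\ell$-tuples $(w_1,\dots,w_\ell)\in(\RR^d)^\ell$ whose entries are $\QQ$-linearly dependent is a countable union of proper linear subspaces, hence has Lebesgue measure zero. For any $\alpha$ outside this bad set, $D_\alpha(g_1),\dots,D_\alpha(g_\ell)$ are $\QQ$-linearly independent in $\RR^d$, so $\Rank(\Ret(P)_\alpha)\geq\ell$.

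Finite local complexity provides only countably many patches, and for each the bad set of $\alpha$ is measure zero; a single generic $\alpha$ therefore serves all patches simultaneously. When $\ell=\infty$, the same argument applies on each finite-dimensional affine slice of the parameter space, which matches the definition of \emph{generic} given in the introduction. The main obstacle is justifying the equality $\Ret(P)=D(H_1(\Gamma_n))$ in the second paragraph: one must verify that every loop in $\Gamma_n$ at $v_P$ lifts to a path in $\RR^d$ terminating at a $P$-occurrence and that this lift is well-defined on homology. This requires a careful matching of the collaring level $n$ to the patch size and parallels the cohomological machinery used to prove Theorem~\ref{thm:main1}.
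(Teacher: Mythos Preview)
Your outline contains a genuine gap that you yourself flag at the end: the equality $\Ret(P)=D(H_1(\Gamma_n))$. The easy containment is $\Ret(P)\subset D(H_1(\Gamma_n))$, since any path in $T$ between two occurrences of $P$ projects under $f_T$ to a loop at $v_P$. But to conclude that $D_\alpha(g_1),\ldots,D_\alpha(g_\ell)$ actually lie in $\Ret(P)_\alpha$ you need the \emph{reverse} containment, and that requires lifting an arbitrary loop in $\Gamma_n$ to a path in $T$. The map $f_T\colon\RR^d\to\Gamma_n$ is not a covering map: at a branch locus the particular tiling $T$ realizes only one of the available continuations, so a loop in $\Gamma_n$ need not lift at all, and there is no general reason its period should be a $\ZZ$-combination of genuine return vectors. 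A related unjustified step is the assertion that $H^1(\Gamma_n;\QQ)\to\check H^1(\Omega_T;\QQ)$ becomes an isomorphism for large $n$; surjectivity is automatic once the direct limit is finite-dimensional, but injectivity can fail, so $H_1(\Gamma_n)$ may well have rank strictly greater than $\ell$, breaking your identification of the shape-parameter space with $(\RR^d)^\ell$.

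The paper sidesteps both issues by arguing dually. Rather than trying to produce $\ell$ independent elements \emph{inside} $\Ret(S(P))$, it constructs a linear map $\varphi\colon\Ret(S(P))\to\QQ^\ell$ by writing the shape change as $S=S_0+\sum_i\alpha_i\otimes w_i$ with $S_0$ rational-valued, the $\alpha_i$ representing a basis of $\check H^1(\Omega_T;\QQ)$, and the $w_i$ rationally independent of $e_1,\ldots,e_d$; then $\varphi$ simply reads off the $w_i$-coefficients of a return vector. If $\varphi$ had rank below $\ell$, some nontrivial combination $\sum b_i\alpha_i$ would vanish on every return vector of $P$, and Theorem~\ref{thm:Just-returns} forces such a cochain to be cohomologically trivial, contradicting the independence of the $\alpha_i$. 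The crucial point is that Theorem~\ref{thm:Just-returns} is the one-directional implication ``vanishes on all return vectors $\Rightarrow$ coboundary'', proved directly from repetitivity, and it replaces the lifting argument you are missing. As a bonus the paper's argument works for \emph{every} patch $P$ with a single shape change, so no countable union of bad sets is needed.
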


Combining the two theorems, we obtain 

\begin{cor}\label{cor:main} Let $T$ be an aperiodic tiling that is 
repetitive and has finite local complexity and suppose that $\ell = 
\Rank(\check H^1(\Omega_T))$. Then, after applying a generic shape
change, the rank of the return module of every 
sufficiently large patch is exactly $\ell$.
\end{cor}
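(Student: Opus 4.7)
The plan is to derive the corollary by directly combining the two main theorems, with careful attention to which tiling each theorem is being applied to. The key conceptual point is that Theorem~\ref{thm:main1} is a statement about a single tiling, while Theorem~\ref{thm:main2} produces a new tiling via a shape change, so I need to confirm that Theorem~\ref{thm:main1} can be reapplied to the shape-changed tiling.

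First, I would invoke Theorem~\ref{thm:main2} to obtain a generic shape change producing a tiling $T'$ all of whose return modules have rank at least $\ell$. Next, I would verify that $T'$ still satisfies the hypotheses of Theorem~\ref{thm:main1}. Finite local complexity and repetitivity are combinatorial properties, and since shape changes by definition preserve the combinatorics (which tiles touch which) they transfer directly from $T$ to $T'$. Aperiodicity could in principle fail under a shape change, but the shape changes that introduce new periods form a lower-dimensional subset of the parameter space $\check H^1(\Omega_T) \otimes \RR^d$ described in the paragraph preceding Theorem~\ref{thm:main2}, hence have measure zero. Intersecting with the generic set from Theorem~\ref{thm:main2} still leaves a generic set of shape changes.

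Since shape changes are MLD equivalences on the combinatorial level, the tiling spaces $\Omega_T$ and $\Omega_{T'}$ are homeomorphic, so $\Rank(\check H^1(\Omega_{T'})) = \Rank(\check H^1(\Omega_T)) = \ell$. Applying Theorem~\ref{thm:main1} to $T'$ then gives that for every sufficiently large patch $P' \subset T'$,
\[ \Rank(\Ret(P')) \le \ell. \]
Combining this upper bound with the lower bound of $\ell$ from Theorem~\ref{thm:main2} yields equality for every sufficiently large patch of $T'$, which is exactly the statement of the corollary.

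I do not expect any genuine obstacle here, since both halves of the argument are already done in the two theorems. The only point requiring a sentence of care is the observation that the generic shape changes from Theorem~\ref{thm:main2} can be taken, without loss, to preserve the hypotheses of Theorem~\ref{thm:main1}; this follows because the failure of aperiodicity under a shape change is a closed lower-dimensional (hence measure-zero) condition, while FLC and repetitivity persist automatically.
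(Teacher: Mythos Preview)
Your proposal is correct and follows the paper's approach: the paper simply writes ``Combining the two theorems, we obtain'' before the corollary and offers no further proof. Your extra care in checking that the shape-changed tiling inherits the hypotheses of Theorem~\ref{thm:main1} is more thorough than the paper, though note that shape changes yield homeomorphisms (not MLD equivalences in general) of tiling spaces, and it is this homeomorphism that guarantees the cohomology ranks agree.
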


For simplicity, we have written Theorem \ref{thm:main2} and Corollary \ref{cor:main} assuming that 
$\check H^1(\Omega_T)$ has finite rank. When $\check H^1$ has infinite rank, 
we can restrict our attention to an $\ell$-dimensional subspace 
$H^1_{\ell} \subset \check H^1$. Our proof of Theorem \ref{thm:main2} actually shows that, 
for a generic shape change with 
parameters in $H^1_\ell \otimes \RR^d$,
the rank of the return module of every patch will be at least $\ell$.
Since we can choose $\ell$ to be as large as we wish, there is no
upper bound to the ranks of return modules, even for arbitrarily large patches. 

The dynamical spectrum of a tiling space is closely associated with return modules of 
large patches. Suppose that a large patch $P$ occurs at two locations $x$ and $y$. 
Then $T-x$ and $T-y$ agree on a large ball around the origin and thus are  
close in the tiling metric (see Section \ref{sec:defs} for precise definitions). 
If $f$ is a continuous function on $\Omega_T$, then
$f(T-y) \approx f(T-y)$. In particular, if $f$ is a continuous eigenfunction of translation 
with eigenvalue $\lambda$, then 
\[ \exp(2 \pi i \lambda \cdot (x-y)) \approx 1, \]
with the approximation getting better and better as $P$ gets bigger and bigger. In most tilings of 
interest (in particular, in all primitive substitution tilings), all measurable eigenfunctions can 
be chosen continuous, so this constraint applies to all eigenvalues of translation. 

Solomyak \cite{Sol97} used this observation to relate the point spectrum of a substitution tiling 
to the set of return vectors and to the stretching factor. Baake and Moody \cite{BM} went further, 
using the return module of a tiling with pure point spectrum to reconstruct its 
cut-and-project structure.
For a recent generalization of the Baake-Moody construction, see \cite{Nicu}. 

\section{Definitions and notation}\label{sec:defs}

In this section we review the basics of tilings, tiling spaces and return modules. For a more comprehensive review, see \cite{TAO} and \cite{tilingsbook}.

A {\em tile} is a (closed) 
topological ball that is the closure of its interior. In addition to its geometry, a tile may carry
a label to distinguish it from other tiles of the same size and shape. If two tiles carry the same label,
then each must be a translate of the other. A {\em tiling} of $\RR^d$ is 
a collection of tiles whose union is all of $\RR^d$ and whose interiors are disjoint.  A {\em patch}
of a tiling is a finite subset of its tiles. A tiling is said to have {\em finite local complexity}, or 
FLC, if for each $r>0$ there are only finitely many patches, up to translation, of diameter up to $r$.
Equivalently, a tiling has FLC if two conditions are met: 
\begin{enumerate}
\item There are only finitely many tile types, up to translation, and 
\item There are only finitely many 2-tile patches, up to translation. That is, there are only finitely
many ways for one tile to touch another. 
\end{enumerate}

The group $\RR^d$ of translations acts on tiles by shifting their positions but leaving their labels 
unchanged. By extension, $\RR^d$ acts on tilings and on patches by translating all of the 
tiles simultaneously. The action of $x \in \RR^d$ on a tiling $T$ is denoted $T+x$. 
A tiling $T$ is said to be {\em aperiodic} if $T+x = T$ implies $x=0$. The orbit of a tiling $T$ under
translation is denoted $\cO(T)$. 

We will frequently consider a particular pattern of adjacent tiles, 
such as the pattern $P=aaba$ in a Thue-Morse tiling, 
without specifying the location of $P$. Strictly speaking, $P$ is an 
equivalence class
of patches under translation rather than a specific patch, but we will abuse notation and terminology by 
calling $P$ a ``patch'' anyway. 
We can then talk about multiple occurrences of $P$ in a tiling, i.e. multiple patches in the 
equivalence class defined by $P$. 

On the set of all tilings by a given set of tile types, we consider the topology induced by the {\em big 
ball metric}, in which two tilings $T$ and $T'$ are considered $\epsilon$-close if they agree on a 
ball of radius $1/\epsilon$ around the origin, up to translation of each by a distance $\epsilon/2$ or less.
A {\em tiling space} is a non-empty translation-invariant set of tilings that is closed in this topology. 

We can obtain a tiling space from any tiling $T$ by taking the closure of $\cO(T)$. This space is called 
the {\em continuous hull} of $T$ and is denoted $\Omega_T$. It is the smallest tiling space that contains
$T$. A tiling $T'$ is in $\Omega_T$ if and only if every patch of $T'$ is a translate of a patch in $T$. 

A tiling $T$ is said to be {\em repetitive} if, for every patch $P \subset T$ there exists a radius $r(P)$
such that every ball of radius $r(P)$ contains at least one copy of $P$. This is equivalent to $\Omega_T$
being a minimal dynamical system. That is, every orbit is dense, so for each $T' \in \Omega_T$, 
$\Omega_{T'} = \Omega_T$. In that case, any two tilings $T_1,T_2 \in \Omega_T$ have exactly the same patches 
(up to translation), so any quantity based on those patches is the same for both tilings.  

Suppose that $T$ and $T'$ are two tilings with the following property: There is a radius $r$ such 
that, for all $x, y \in \RR^d$ such that $T-x$ and $T-y$ agree on a ball of radius $r$ around the origin,
$T'-x$ and $T'-y$ agree on a ball of radius 1 around the origin. This is a precise way of saying that
the patterns of $T'$ are determined in a local way from the patterns of $T$ in the exact same locations.
If this condition is met, then we say that $T'$ is {\em locally derivable}, or LD, from $T$. If $T'$ is LD from $T$ and $T$ is LD from $T'$, then $T$ and $T'$ are {\em mutually locally derivable}, or MLD. The
local rule deriving $T$ from $T'$ extends to a topological conjugacy from $\Omega_T$ to $\Omega_{T'}$ that 
we call an {\em MLD equivalence}. 

In principle, the tiles in a tiling may have very complicated (say, fractal) boundaries. However, 
every FLC tiling is MLD to a tiling whose tiles are convex polygons (or polytopes) that meet full edge
to full edge. We can therefore assume, with no loss of generality, that our tilings are of this sort. 

The procedure for doing this conversion is called the ``Voronoi trick''. It involves first 
picking a ``control point'' for each tile (or for each instance of a more complicated patch $P$) 
and replacing the tiling with a point pattern, and then replacing
each point in the point pattern with its ``Voronoi cell'' consisting of all points in $\RR^d$ that are 
closer to the given control point than to any other control point. For more information on the 
Voronoi trick, see \cite{TAO} or \cite{tilingsbook}. 

Now pick a patch $P$ and pick a control point within $P$ to represent the patch. (For instance, we might pick the control point of the pattern $abaa$ to be the left 
endpoint of the $b$ tile.) When we speak of the locations of $P$ in a tiling, we mean the locations of 
the control point. Let $\{x_1, x_2, \ldots\}$ be all the locations 
of $P$ in $T$. The relative positions $x_i-x_j$ are called {\em return vectors} for $P$ and do not
depend on the choice of control point. The span
(over $\ZZ$) of the the return vectors is called the {\em return module} of $P$ and is denoted $\Ret(P)$. 
The fact that $x_i-x_j$ is a return vector is a property of the ball of radius slightly larger than 
$|x_i-x_j|/2$ centered at 
the point $(x_i+x_j)/2$. If $T$ is repetitive, then a copy of this large ball appears in 
every $T' \in \Omega_T$, so 
$x_i-x_j$ is also a return vector for $P$ in $T'$. The set of return vectors and the 
resulting return module, are thus quantities that we can associate with the tiling space $\Omega_T$
rather than with just the specific tiling $T$.

If $P$ and $P'$ are patches with $P \subset P'$, then every return vector of $P'$ is also a return vector
of $P$, so $\Ret(P') \subset \Ret(P)$ and $\Rank(\Ret(P')) \le \Rank(\Ret(P))$. We are interested in the limit 
of this rank as the patches grow to $\infty$. 

Note that we are taking the limit of the rank, not the rank of the limit! In some tilings, the return
module for a large patch can be smaller, but of the same rank, than the return module for a small patch. 
For instance, in the Thue-Morse tiling with $a$ and $b$ tiles both having length 1, the return module for
the one-letter patch $P_1=a$ is $\ZZ$ while the return module of $P_1=abb$ is $2\ZZ$, and there are
other patches whose return modules are $4\ZZ$, $8\ZZ$, etc. The limit of these
modules is the rank-0 set $\{0\}$, but the rank of $2^n \ZZ$ is 1 for all $n$, 
so the limit of the rank is 1. (In other tilings, such as the Fibonacci tiling, all patches have the same 
return module and there is no need to take a limit at all.) 

If $T$ is repetitive, then 
the details of how we take the limit of large patches are unimportant. Any patch $P_i$ has a 
repetitivity radius $r(P_i)$. Any patch $P_j$ that contains a ball of radius $r(P_i)$ must therefore 
contain a copy of $P_i$. In order to take a limit over all patches with our partial ordering, it is
sufficient to consider a single sequence $P_1 \subset P_2 \cdots$ of patches such that the inner diameter of the 
$P_i$'s goes to $\infty$. In particular, we could pick the $P_i$'s to come from balls of increasing
radius around the origin in a specific tiling $T$. 

\section{Tiling cohomology}

The precise definition of the \v Cech cohomology of a space is complicated, involving open covers, nerves
of said covers, the simplicial cohomology of those nerves, and a limit over all open covers partially 
ordered by refinement \cite{Hatcher}. 
Fortunately, those details are not needed for a working understanding of tiling 
cohomology. For our purposes, two facts are sufficient \cite{CohoChapter}: 
\begin{enumerate}
\item The \v Cech cohomology of a CW complex is isomorphic to the singular cohomology, which in turn is
isomorphic to many other cohomology theories. On a CW complex, ``all cohomologies are the same''. 
\item The \v Cech cohomology of an inverse limit space is the direct limit of the \v Cech cohomologies
of the approximants. 
\end{enumerate}
Tiling spaces are not CW complexes, but they {\em are} inverse limits of CW complexes. Each approximant, called
an Anderson-Putnam complex, describes the tiling in a neighborhood of the origin. A point in the inverse limit of the approximants is a set of consistent
instructions for tiling bigger and bigger portions of $\RR^d$. The union of the neighborhoods is all of 
$\RR^d$, so a point in the inverse limit is a set of instructions for tiling all of space, which is 
tantamount to a tiling itself. There are numerous ways to construct approximants for tiling spaces, but 
they are all qualitatively similar. The key ideas are due to Anderson and Putnam \cite{AP} 
and to G\"ahler \cite{Gaehler}. 
See \cite{inverse} for a unification of their arguments and \cite{tilingsbook} for a review. 

The upshot is that the \v Cech cohomology of a tiling space is constructed from data about the local 
structure of a tiling, where ``local'' can include information out to any finite distance but not out
to infinity.  This idea was codified by Kellendonk and Putnam \cite{Kel,KP} as a new cohomology theory
called Pattern Equivariant (PE) cohomology. 

Consider a reference tiling $T$ of $\RR^d$ and a function $f: \RR^d \to \RR$. We say that a function $f: 
\RR^d \to \RR$ is PE with
radius $r$ if its value at a point $x$ depends only on the form of $T$ in a ball of radius $r$ around $x$.
More precisely, if $f$ is PE with radius $r$ and if $T-x_1$ and $T-x_2$ agree on a ball of radius $r$ 
around the origin, then $f(x_1)=f(x_2)$. A function is said to be PE (without any qualifiers) if it is PE 
with some finite radius. We can likewise define PE differential forms. It is easy to check that the
exterior derivative of a PE form is PE. Kellendonk and Putnam defined the
(real-valued) PE cohomology of a tiling $T$ to be 
\[ H^k_{PE}(T) = \frac{\hbox{Closed PE $k$-forms}}{d(\hbox{PE $k-1$-forms})} \]
and proved that $H^k_{PE}(T)$ was isomorphic to $\check H^k(\Omega_T) \otimes \RR$. 

To obtain an integer version of PE cohomology, we can consider PE cochains. A PE $k$-cochain with radius $r$ assigns 
an integer to every $k$-cell in the tiling $T$ based on the neighborhood of size $r$ around that $k$-cell. (If $T$ does not have clearly defined vertices, edges, 
faces, etc., apply the Voronoi trick to convert $T$ into a tiling that does.) That is, if the tiling is the same
within some fixed distance $r$ of two different $k$-cells, then our function must assign the same value 
to both cells. A cochain is said to be PE if it is PE with some radius. 

The coboundary of a cochain is defined the same as with ordinary (not PE) cochains. If $\alpha$ is a 
$k$-cochain and $c$ is a $(k+1)$-cell, then $\delta \alpha$ is a $(k+1)$-cochain whose value on $c$ is
\[ \delta \alpha (c) := \alpha(\partial c), \]
where $\partial c$ is the boundary of $c$. The coboundary of a PE cochain is PE (although possibly with
a slightly larger radius). Sadun \cite{integer} defined the integer-valued PE cohomology of $T$ to be 
\[ H^k_{PE}(T) = \frac{\hbox{Closed PE $k$-cochains}}{\delta(\hbox{PE $(k-1)$-cochains})} \]
and proved that this was isomorphic to $\check H^k(\Omega_T)$. 
The same construction works with values in any Abelian group, not just the integers $\ZZ$. 

We are interested in the rank of $\check H^1(\Omega_T)$. This is the same as the dimension over $\QQ$ of 
$\check H^1(\Omega_T) \otimes \QQ$, which is naturally isomorphic to $H^1_{PE}(T,\QQ)$, the first PE 
cohomology of $T$ with values in $\QQ$. All of our calculations will be done in the PE setting, where
we represent cohomology classes with cochains on $T$.  

The following theorem will allow us to restrict our attention to return vectors of large patches. 
\begin{theorem}\label{thm:Just-returns}
Let $P$ be a patch in a repetitive tiling $T$ and suppose that the locations of $P$ are 
$\{x_1, x_2, \ldots\}$. If $\alpha$ is a closed PE 1-cochain, then the cohomology class of $\alpha$
depends only on the value of $\alpha$ applied to paths from $x_i$ to $x_j$. That is, 
\begin{enumerate}
\item If a closed PE 1-cochain $\alpha$ evaluates to zero on a path from $x_i$ to $x_j$ for each pair $(i,j)$, then
$\alpha$ represents the zero cohomology class. 
\item If two closed PE 1-cochains $\alpha$ and $\beta$ give the same values on a path from each $x_i$ to each other
$x_j$, then $\alpha$ and $\beta$ represent the same cohomology class. 
\end{enumerate}
\end{theorem}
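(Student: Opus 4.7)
The plan is to prove (1); statement (2) then follows immediately by applying (1) to the closed PE 1-cochain $\alpha - \beta$. To establish (1), I will exhibit a PE 0-cochain $f$ on $T$ with $\delta f = \alpha$, so that $\alpha$ is a coboundary and represents the zero class in $H^1_{PE}(T)$.

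By repetitivity, fix a radius $R_P$ such that every closed ball of radius $R_P$ in $T$ contains at least one occurrence of $P$. I will make a PE choice of a nearby return location at every vertex $v$ of $T$: fix once and for all a deterministic tiebreaker on $\RR^d$ (say, lexicographic order on coordinates), and let $i(v)$ be the index of the occurrence of $P$ whose control point lies in the closed $R_P$-ball around $v$ and comes first under that order. The relative vector $x_{i(v)} - v$ is then determined by the tiling in an $R_P$-neighborhood of $v$, so the assignment $v \mapsto i(v)$ is PE.

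Now, since $\alpha$ is closed and $\RR^d$ is simply connected (we may assume the Voronoi trick has been applied, so $T$ has a polytopal CW structure), the sum $\alpha(\gamma)$ along any edge path $\gamma$ depends only on its endpoints. Define $f(v) \defeq \alpha(\gamma_v)$ for any edge path $\gamma_v$ from $x_{i(v)}$ to $v$. Such a path stays inside the ball of radius $R_P$ around $v$, and $\alpha$ is PE with some radius $r_\alpha$, so $f(v)$ is determined by the tiling in a ball of radius $R_P + r_\alpha$ around $v$. Thus $f$ is a PE $0$-cochain.

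Finally, for any 1-cell $e$ oriented from $v$ to $w$, the concatenation of a chosen edge path $\eta$ from $x_{i(w)}$ to $x_{i(v)}$, then $\gamma_v$, then $e$, then $\gamma_w$ traversed in reverse, is a closed loop in the 1-skeleton, so closedness of $\alpha$ gives
\[ \alpha(\eta) + f(v) + \alpha(e) - f(w) = 0. \]
Because $\eta$ is a path between two occurrences of $P$, the hypothesis of (1) gives $\alpha(\eta) = 0$, hence $\delta f(e) = f(w) - f(v) = \alpha(e)$, i.e.\ $\alpha = \delta f$. The one delicate point is the construction of $i(v)$: a naive ``nearest occurrence'' rule can be multi-valued, but imposing a fixed deterministic tiebreaker makes the choice depend only on a bounded neighborhood of $v$, which is what forces $f$ to be PE; the rest of the argument is pure path concatenation together with the hypothesis.
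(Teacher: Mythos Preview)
Your proof is correct and follows essentially the same strategy as the paper: define $f(v)$ as $\alpha$ evaluated on a path from a nearby occurrence of $P$ to $v$, use repetitivity to bound the length of that path, and deduce that $f$ is PE with radius roughly $R_P + r_\alpha$; then derive (2) from (1) applied to $\alpha-\beta$.

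The one noteworthy difference is that your tiebreaker mechanism is unnecessary. Because $\alpha$ vanishes on every path between occurrences of $P$, the value $\alpha(\gamma_v)$ is independent of \emph{which} nearby $x_i$ you start from: if $x_i$ and $x_j$ both lie in $B_{R_P}(v)$, the difference of the two candidate values of $f(v)$ is $\alpha$ on a path from $x_i$ to $x_j$, which is zero by hypothesis. The paper exploits this by defining $f(z)$ as $\alpha$ on a path from the single fixed point $x_1$ to $z$ (so that $\alpha=\delta f$ is automatic), and then observing that this equals $\alpha(c_2)$ for \emph{any} short path $c_2$ from a nearby $x_i$ to $z$, with no selection rule needed. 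Your lexicographic tiebreaker is also slightly delicate as written: ordering the absolute control points $x_i$ would not be PE, and you really need to order the relative displacements $x_i - v$, as your subsequent sentence implicitly acknowledges. Dropping the tiebreaker entirely sidesteps this issue.
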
 

\begin{proof} We begin with the first statement. 
Let $r_1$ be the PE radius of $\alpha$ and let $r_2$
be the recognizability radius of $P$. For each vertex $z$ in our 
tiling, let $f(z) = \alpha(c)$, where $c$ is a path from $x_1$ to $z$. 
(Since $\alpha$ is closed, this does not depend on our choice of 
path.) We manifestly have $\alpha = \delta f$. 

We pick our path $c$ to be the concatenation of two paths $c_1$
and $c_2$, where $c_1$ goes from $x_1$ to an $x_i$ that is close to $z$ and $c_2$ goes 
from $x_i$ to $z$.  See Figure \ref{fig:thm4}. We can also pick the path $c_2$ to stay within
a distance $r_2$ of $z$. Since $\alpha(c_1)=0$, $f(z)=\alpha(c_2)$.
However, $\alpha(c_2)$ only depends on the pattern $T$ within a 
distance $r_1$ of all of the edges in $c_2$, and therefore within
a distance $r_1+r_2$ of $z$. This makes $f$ a PE function with 
radius $r_1+r_2$. Since $\alpha$ is the coboundary of a PE function,
$\alpha$ represents the zero class in cohomology.

\begin{figure}[ht]
    \centering
    \includegraphics[width=0.7\textwidth]{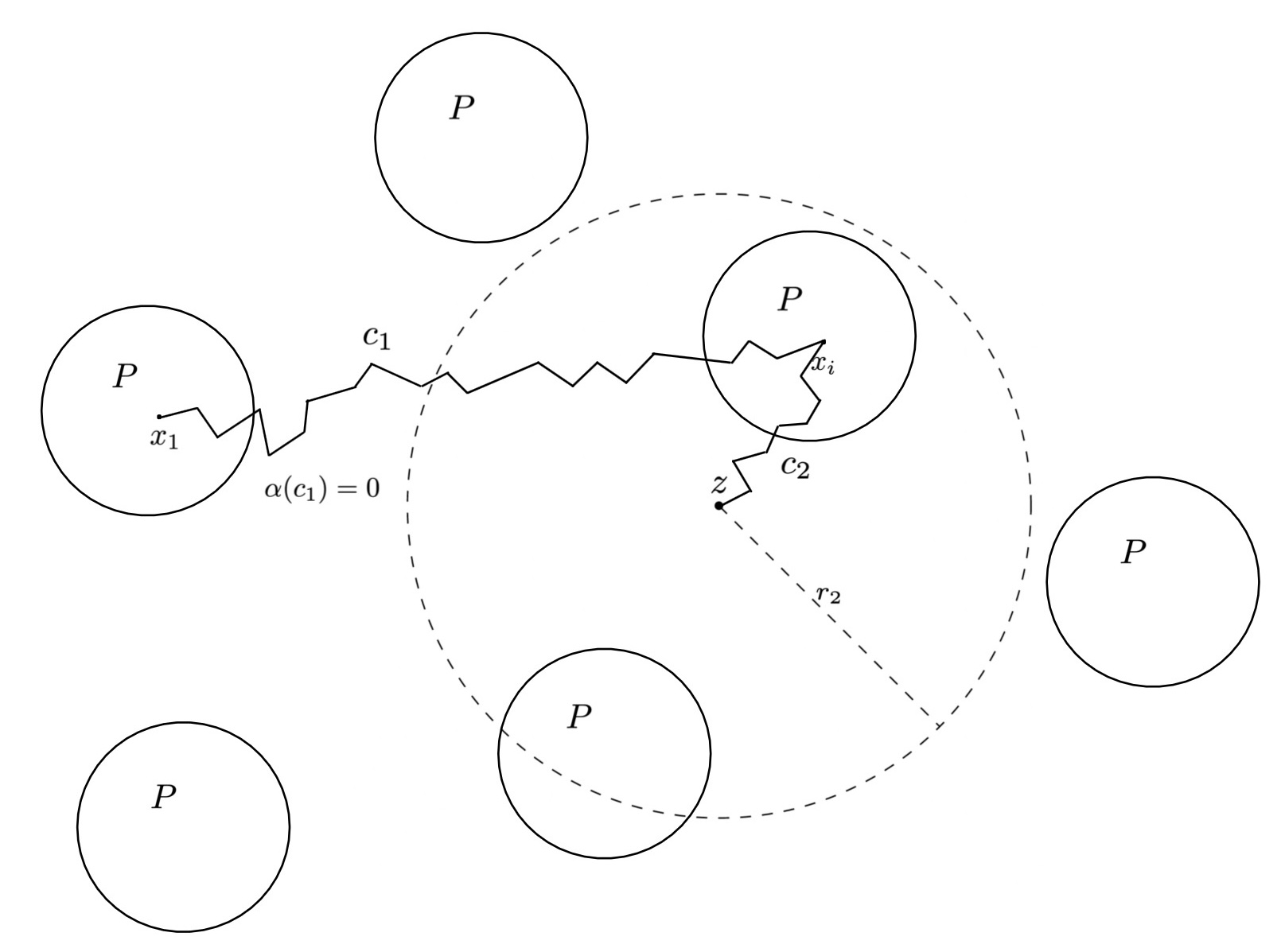}
    \caption{The function $f(z) = \alpha(c_1)+\alpha(c_2) = \alpha(c_2)$ is pattern-equivariant with radius $r_1+r_2$.}
    \label{fig:thm4}
\end{figure}

For the second statement, we simply apply the first statement to 
$\alpha - \beta$. Since $\alpha-\beta$ evaluates to zero on all 
paths from $x_i$ to $x_j$, the class of $\alpha-\beta$ is zero, so 
the class of $\alpha$ is the same as the class of $\beta$. 
\end{proof}

In fact, we can go a step beyond Theorem \ref{thm:Just-returns}. Using
a dual version of the Voronoi trick, we can construct a tiling $T'$, MLD
to the original tiling $T$, whose vertices are the points $x_1, x_2, \ldots$. 
Since $\Omega_T$ and $\Omega_{T'}$ are homeomorphic, we 
can identify the \v Cech cohomology of $\Omega_T$ with the \v Cech
cohomology of $\Omega_{T'}$, which in turn is isomorphic to the PE
cohomology of $T'$. As a result, we can represent classes in 
$\check H^1(\Omega_T)$ with closed PE 1-cochains on $T'$. That is, we can represent classes 
in $\check H^1(\Omega_T)$ 
as functions on edges in $T'$ 
that connect pairs of locations of $P$ in $T$. In particular, every
linear function $L: \Ret(P) \to \ZZ$ defines a cohomology class in $\check H^1(\Omega_T)$
that we denote $\phi(L)$. 

\section{Proofs of main theorems}

\begin{proof}[Proof of Theorem \ref{thm:main1}]
Let $k$ be the minimum rank of the return module of any patch and 
let $P_0$ be a patch for which $\Rank(\Ret(P_0))=k$. Every sufficiently
large patch $P$ (e.g., any patch whose inner radius is greater than
the repetitivity radius of $P_0$) will contain a copy of $P_0$ and 
so will have $\Rank(\Ret(P)) \le \Rank(\Ret(P_0)) =k$. But $k$ is the 
minimum possible rank, so we must have $\Rank(\Ret(P))=k$. 

Let $P$ be any patch whose return module has rank $k$. 
As noted in the comment after the proof of Theorem \ref{thm:Just-returns}, every linear function $L: \Ret(P) \to \ZZ$ 
defines a closed
1-cochain $\alpha_L$ on an associated tiling $T'$ whose vertices are the
locations of $P$ in $T$, and thus defines a cohomology class 
$\phi(L) \in H^1_{PE}(T') \simeq \check H^1(\Omega_T)$. 

We claim that the map $\phi$ from linear functions to cohomology classes is injective. 
To see this, suppose that 
$\phi(L)$ is the zero class in cohomology. Then the 1-cochain $\alpha_L$ 
on $T'$ that is defined by $L$ must be a coboundary: 
\[ \alpha_L = \delta f, \]
where $f$ is a PE function with some radius $r$. Let $P'$ be a 
patch of $T'$ containing a ball of radius $r$ centered at a point $x_i$. Then $\alpha$ applied to any return vector of $P'$ must be
zero, insofar as $f$ takes on the same value at the endpoints of 
a chain connecting two instances of $P'$. Thus, $L$ restricted
to $\Ret(P') \subset \Ret(P)$ is zero. 

However, the rank of $\Ret(P)$ is already the minimum among all patches
of $T$. This implies that $\Ret(P')$ is a submodule of $\Ret(P)$ of full rank, 
so the only linear function on $\Ret(P)$ that vanishes on 
$\Ret(P')$ is the zero function. That is, $\phi(L)=0$ implies
that $L=0$. 

The rank of $\Ret(P)$ is the same as the rank of the space of linear
functions $\Ret(P) \to \ZZ$, which (by the injectivity of $\phi$) 
is the same as 
the rank of the image of $\phi$, which is bounded by the rank of 
$\check H^1(\Omega_T)$. Thus for all sufficiently
large patches $P$, the rank of $\Ret(P)$ is bounded by the rank of 
$\check H^1(\Omega_T)$. 
\end{proof}

Before proving Theorem \ref{thm:main2}, we review the way that the shapes and sizes of all tiles are 
parametrized by cochains. In any tiling $T$, there is a vector-valued cochain $\cF(T)$, called the 
{\em Fundamental Shape Cochain} of $T$, that assigns to every edge the actual displacement along that edge.
This cochain is closed, since the net displacement along the boundary of any 2-face is zero. 
The corresponding class $[\cF(T)] \in H^1_{PE}(T,\RR^d)$ is called the {\em Fundamental Shape Class}. 

To obtain a tiling with the same combinatorics as $T$ but different geometry, we deform the 
cochain $\cF(T)$ into another cochain $S$. That is, we construct a new tiling (denoted $S(T)$) whose 
vertices, edge, faces, etc. are in 1--1 correspondence with those of $T$, carrying the same labels, such
that the relative position of any pair of vertices in $S(T)$ is given by $S$ applied to a path 
connecting the corresponding vertices of $T$. See Figure \ref{fig:shapechanges} for an example.

\begin{figure}[ht]
    \centering
    \includegraphics[width=0.75\textwidth]{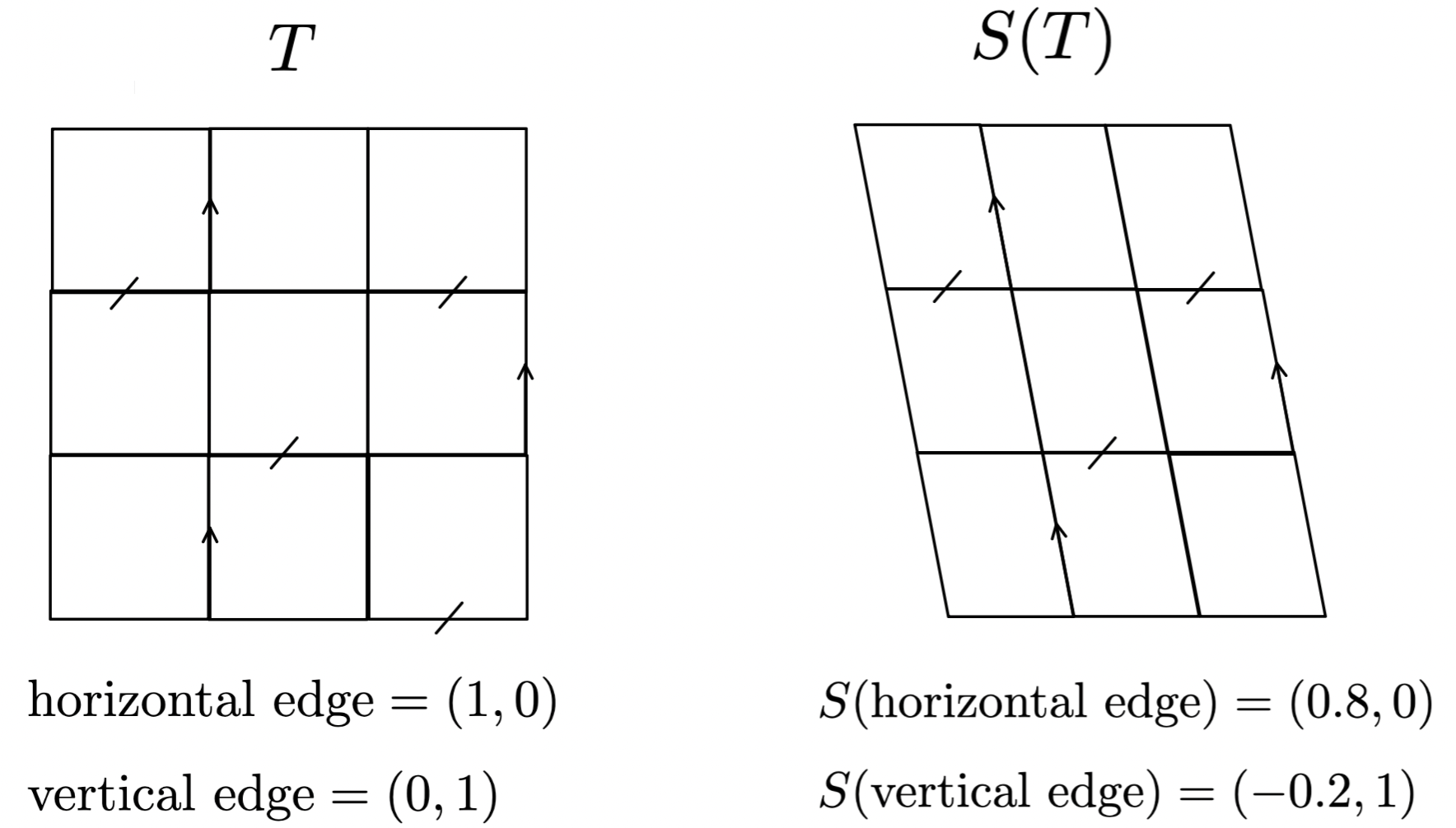}
    \caption{By changing the shape cochain to $S$, we convert a tiling by squares into
    a tiling by parallelograms.}
    \label{fig:shapechanges}
\end{figure}

In order for the tiles of $S(T)$ to fit together and preserve finite local complexity, we need
three conditions:
\begin{itemize}
\item The (vector-valued) cochain $S$ must be PE. 
\item $S$ must be closed, and 
\item The new shapes must not turn any of the tiles inside-out. If $d=2$, this is equivalent to the 
boundary of each tile being a positively oriented simple closed curve. (Figure 8's are not allowed!). 
In higher dimensions the condition is similar, although trickier to describe. 
\end{itemize}
The third condition is open. That is, any closed PE cochain $S$ that is close to $\cF(T)$
will automatically satisfy it. As long as we concentrate on small shape changes, we only
need to keep track of the first two conditions. 

(While not needed for this paper, large shape change classes are indeed possible. Given a cochain $S$ that
turns some tiles inside-out, it is usually possible to find another closed PE cochain $\tilde S$, 
cohomologous to $S$, that satisfies the third requirement. The set of classes in $H^1_{PE}(T,\RR^d)$ 
that can be realized as fundamental shape classes is dense, open, and of full measure. 
See \cite{JS} for details.)  

In general, if $A$ is any object associated with $T$, then we write $S(A)$ to denote the corresponding
object in $S(T)$, and if $B$ is any object in $S(T)$, then we write $S^{-1}(B)$ to denote the 
corresponding object in $T$. Thus the hull of $S(T)$ is $S(\Omega_T) = \Omega_{S(T)}$, and for each 
patch $P$ in $T$ we are interested in the rank of $\Ret(S(P))$. 

Note that $\Omega_T$ and $S(\Omega_T)$ are homeomorphic and so have identical cohomology groups. All 
statements about cohomology apply both to $T$ and to $S(T)$. 

If the rank of $\check H^1(\Omega_T)$ is $\ell$, then $\check H^1(\Omega_T,\RR^d) \simeq \RR^{\ell d}$
is a Euclidean space. On this space, the sets of zero (Lebesgue) measure are clearly defined. The
essence of Theorem \ref{thm:main2} is that, after applying a shape change with the class of $S$ avoiding
a certain set of measure zero, we obtain return modules of rank at least $\ell$. 

\begin{proof}[Proof of Theorem \ref{thm:main2}]

Let $e_1, \ldots, e_d$ be the standard basis for $\RR^d$. Since $\RR^d$ has infinite dimension over $\QQ$, 
we can find $\ell$ vectors $w_1, \ldots, w_\ell \in \RR^d$ such that $(e_1,\ldots,e_d,w_1,\ldots,w_\ell)$
are linearly independent over $\QQ$. That is, there is no way to write 0 as a nontrivial 
linear combination of these vectors with rational coefficients. 
Among all $\ell$-tuples of vectors in $\RR^d$, those that
meet this condition have full measure, since there are only countably many possible rational linear
combinations and each one is zero only on a set of codimension $d$. We can also shrink 
the vectors 
$w_i$ by an integer factor to make them arbitrarily small without affecting linear independence
over $\QQ$. 

Since the rank of $\check H^1(\Omega_T)$ is $\ell$, we can choose $\ell$ closed PE cochains 
$\alpha_1, \ldots, \alpha_\ell$
whose classes form a linearly independent set 
in $H^1_{PE}(T, \QQ) \simeq \check H^1(\Omega_T) \otimes \QQ$. 

We do our shape change in two steps. First we change to a new cochain $S_0$ that evaluates to a 
rational vector on each edge. The set of such rational shape cochains
is dense in the set of all shape cochains \cite{SW}, so we can pick $S_0$ arbitrarily close to our original $\cF(T)$. 
We then define 
\[ S = S_0 + \sum_{i=1}^\ell \alpha_i \otimes w_i. \]
What remains is picking an arbitrary patch $P$ in $T$ 
and showing that $\Rank(\Ret(S(P))) \ge \ell$. 

The displacement along any edge $e$ of $S(T)$, which is the same as $S$ applied to $S^{-1}(e)$,
is a rational vector (namely the contribution of $S_0$) plus a rational linear combination of the $w_i$'s. 
Let $I_i$, applied to a rational linear combination of $(e_1,\ldots,e_d,w_1,\ldots,w_\ell)$, 
give the coefficient of $w_i$. This applies in particular to the return vectors of $S(P)$ and indeed 
to all elements of $\Ret(S(P))$. Note that if $v_1$ and $v_2$ are vertices in $T$, then $I_i$ applied to the displacement $S(v_2)-S(v_1)$ is the same as $c_i$ applied to a 1-chain from $v_1$ to $v_2$. 
The collection $(I_1, \ldots, I_\ell)$ defines
a linear map $\varphi: \Ret(S(P)) \to \QQ^\ell$. We claim that the image of $\varphi$ has rank $\ell$, which
then implies that $\Ret(S(P))$ has rank at least $\ell$, as claimed in Theorem \ref{thm:main2}.

To see that the image of $\varphi$ contains $\ell$ linearly independent elements, suppose otherwise. Then
there exists a nonzero vector $b= (b_1,\ldots,b_\ell) \in \QQ^\ell$ that is orthogonal to 
$\varphi(v)$ for every return vector $v$ of $S(P)$. However, 
\[ 0 = b \cdot v = \sum (b_i \alpha_i) (S^{-1}(v)). \]
This implies that 
$\sum b_i \alpha_i$ evaluates to zero on all return vectors of $P$. By Theorem \ref{thm:Just-returns}, this
then implies that the cohomology class of $\sum b_i \alpha_i$ is zero. However, the cochains $\alpha_i$ were assumed to
represent linearly independent cohomology classes and $b$ was assumed to be nonzero. Contradiction. 

We have obtained a return module of rank $\ell$ for any set of cochains $\alpha_i$ whose 
cohomology classes form a basis for $\check H^1(\Omega, \QQ)$ 
and for almost every set of vectors $w_i$ in a neighborhood
of zero. This was done for a particular starting set of tile shapes, but the same argument works for 
any starting set. As a result, the set of shape classes for which all return modules have rank $\ell$
or greater has full measure in the $\ell d$-dimensional space of possible shape classes. 
\end{proof}

Note that, prior to the last paragraph, the proof never uses the assumption that the dimension of 
$\check H^1(\Omega_T, \QQ)$ was 
$\ell$. It merely uses the existence of $\ell$ linearly independent cohomology classes represented 
by closed PE cochains $c_1, \ldots, c_\ell$. 
If $\check H^1(\Omega_T, \QQ)$ is infinite-dimensional, then we can find such a set of classes (and 
cochains) for any positive integer $\ell$. 
A shape change using generic shape classes in $\check H^1_\ell \otimes \QQ$, where $\check H^1_\ell$ 
is the span of the cohomology classes of the $\alpha_i$'s, will then give us return modules of rank at 
least $\ell$. This justifies the comment after the statement of Corollary \ref{cor:main}. 

\section{Examples} \label{sec:examples}

\subsection{One dimensional examples}

\begin{example}[Fibonacci] The Fibonacci tiling is based on the substitution $a \to ab$, $b \to a$. The first
cohomology of the resulting tiling space has rank 2: $\check H^1(\Omega) = \ZZ^2$. Let $P_0$ be a
one-tile patch consisting of an $a$ tile. The distance from each $P_0$ to the subsequent $P_0$ is either the length of 
an $a$ tile (if the two $P_0$'s are back-to-back, as in $aa$) or the length of an $a$ tile plus the length of a $b$ 
tile (if the two $P_0$'s are separated by a $b$, as in $aba$). 
This means that $\Ret(P_0)$ is the span of the length
of an $a$ tile and the length of a $b$ tile. This has rank 1 if the ratio of the two lengths is rational
and has rank 2 if the ratio is irrational. 

What about bigger patches? Every large patch $P$ must contain a supertile of some order $m$ and must 
be contained in some supertile $M$. This means that we must have 
\[ \Ret(P_2) \subset \Ret(P) \subset \Ret(P_1), \]
where $P_1$ is a supertile of order $m$ and $P_2$ is a supertile of order $M$. However, $\Ret(P_1)$ and 
$\Ret(P_2)$ are the spans of the lengths of the two kinds of supertiles of order $m$ and $M$, respectively. 
Since the substitution matrix $\left ( \begin{smallmatrix} 1 & 1 \cr 1 & 0 \end{smallmatrix} \right )$ 
is invertible over $\ZZ$, having determinant $-1$, both of these are the 
same as the span of the lengths of the $a$ and $b$ tiles. In other words, all patches $P$ have exactly 
the same return module as $P_0$ and in particular have the same rank: 1 if $|a|/|b| \in \QQ$ and 2 
if $|a|/|b| \not \in \QQ$. 

Note how these results fit with the general scheme of Theorems \ref{thm:main1} and \ref{thm:main2}. 
The cohomology has rank 2, so the return modules of large patches cannot have rank greater than 2.
With a generic choice of tile lengths, return modules have rank equal to 2. Specifically, the return
modules of large (and small) patches have rank 2 unless the ratio $|a|/|b|$ lies in a countable union
of codimension-1 subsets of $\RR$, namely the rational numbers $\QQ$. 

So far we have only considered versions of the Fibonacci tiling where all $a$ tiles have length $|a|$ and
all $b$ tiles have length $|b|$. We could also describe the Fibonacci tiling using collared tiles, with
several different varieties of $a$ tile, each with its own length, and several different varieties of 
$b$ tile. The return module of small patches could then have rank greater than 2. For instance, the
return vectors for vertices of arbitrary type are spanned by the lengths of all the different collared 
tiles. By collaring to a big enough radius and picking tile lengths that are rationally independent, we can
get this rank to be as large as we wish. However, the return modules of {\em large} patches can only 
have rank 1 or 2. 

Incidentally, every Fibonacci tiling space obtained by collaring and varying the lengths of the collared
tiles turns out to be mutually locally derivable (MLD) to a Fibonacci tiling space involving uncollared 
tiles. (This is because two shape classes in the same cohomology class give rise to 
MLD tilings and because the generators of $\check H^1(\Omega)$ can be expressed in terms 
of uncollared tiles.) Tilings that are MLD have exactly the same return modules for large patches, but not necessarily
for small patches. 

\end{example}

\begin{example}[Sturmian tilings] Our analysis of the Fibonacci tiling relied on its 
substitutive structure, but it is possible to derive similar results for any Sturmian tiling. 
A Sturmian tiling (see e.g. \cite{Arnoux}) is a canonical cut-and-project tiling from 2 
dimensions to 1. There
are two kinds of tiles, which we call $a$ and $b$, with the ratio of frequencies being an irrational number $\alpha$. Such a tiling can be described via a substitution if and only if $\alpha$ is a quadratic irrational. The Fibonacci tiling is the simplest such case, with $\alpha$ being the golden ratio 
$\phi = (1+\sqrt{5})/2$. 

The first cohomology of any Sturmian tiling space is $\ZZ^2$, with generators that count $a$ 
and $b$ tiles, respectively. As with the Fibonacci tiling, the return module for the smallest possible
patch (a single tile) is generated by the lengths of the two tiles. This has rank 1 if $|a|/|b|$ is
rational and rank 2 if $|a|/|b|$ is irrational. If we collar the Sturmian tiling and vary the lengths of 
the collared tiles, we can get the return modules of small patches to have arbitrarily large rank, but 
the return modules of large patches always have rank 2 for generic choices of the lengths of the $a$ and 
$b$ tiles and rank 1 for countably many values of $|a|/|b|$. 

\end{example}

\begin{example}[Thue-Morse] The Thue-Morse tiling is based on the substitution 
$a \to ab$, $b \to ba$. Note that both kinds of supertiles have one $a$ tile and
one $b$ tile. No matter what lengths we assign to the basic tiles, all supertiles
have the same length $|a|+|b|$. 

Any patch $P$ of length 5 or greater must have two consecutive letters of the same type, as the patterns $ababa$ and $babab$ never appear. However, $aa$ or $bb$ can
only appear when there is a supertile boundary in between the two $a$'s or two $b$'s. 
As a result, there is a unique way to group the tiles in $P$ into supertiles. This 
implies that any return vector between two instances of $P$ must be a multiple 
of $|a|+|b|$, and hence that $\Ret(P)$ is an infinite cyclic group. 

Note that $\Ret(P)$ isn't necessarily all of $(|a|+|b|)\ZZ$. If $P$ is long enough to determine
the locations of the order-$n$ supertiles, then $\Ret(P)$ is actually contained in 
$2^{n-1}(|a|+|b|)\ZZ$. However, this does not affect the rank of $\Ret(P)$, which is 1 for all $P$ 
of length 5 or greater.

At first glance this result is surprising, since the rank of $\check H^1(\Omega) = \ZZ[1/2] \oplus \ZZ$ is 2, not 1. Theorem \ref{thm:main2} says that a generic
shape change should result in return modules that have rank 2. 

The answer is that part of the cohomology involves collared tiles and cannot 
be expressed in terms of the basic uncollared tiles. As we will soon see, a 
generic shape change among {\em collared} tiles does indeed result in 
return modules of rank 2 or higher for arbitrary patches, and of rank 2 for 
large patches. 

Since the patterns $aaa$ and $bbb$ never appear, 
there are six possible once-collared tiles: 
\begin{eqnarray*}
a_1 = (a)a(b), &\qquad & b_1 = (b)b(a), \cr 
a_2  =  (b)a(a), &\qquad & b_2  =  (a)b(b), \cr 
a_3  =  (b)a(b), &\qquad & b_3  =  (a)b(a),  
\end{eqnarray*}
where $(x)y(z)$ denotes a $y$ tile that is preceded by an $x$ tile and followed by 
a $z$ tile. 

The Anderson-Putnam complex that describes possible adjacencies between tiles 
is shown in Figure \ref{fig:thuemorse}. 

\begin{figure}[ht]
    \centering
    \includegraphics[width=0.2\textwidth]{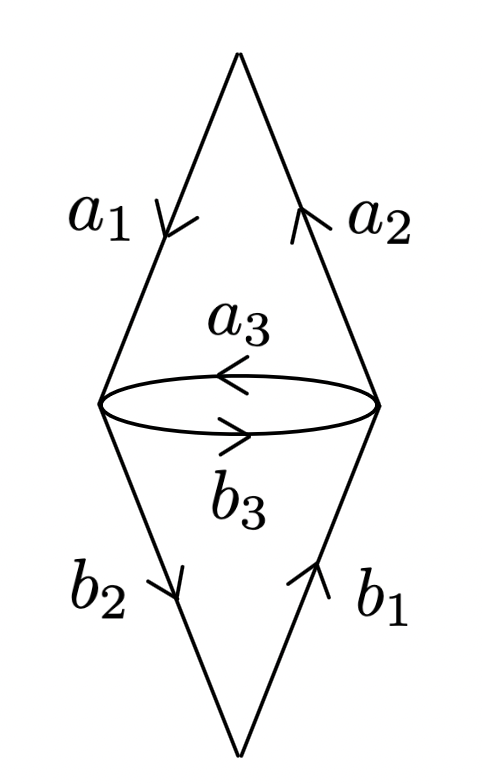}
    \caption{The Anderson-Putnam complex for the once-collared Thue-Morse substitution}
    \label{fig:thuemorse}
\end{figure}

A path between two instances of a tile 
defines a closed loop in this complex, so the length of an arbitrary return vector is 
an integer combination of 
\begin{eqnarray*} 
\gamma_1 &=& |a_1| + |a_2| - |a_3| \cr 
\gamma_2 &=& |b_1| + |b_2| - |b_3| \cr 
\gamma_3 &= & |a_3| + |b_3|. 
\end{eqnarray*} 
The quantities $\{\gamma_1, \gamma_2, \gamma_3\}$ are rationally 
independent given generic choices of tile lengths, 
so the return module for individual collared tiles has (generic) rank 3. 

However, return modules for larger patches involve closed loops in an analogous
complex built from supertiles, not just from individual tiles.
Substitution sends $\gamma_1$ and $\gamma_2$ to $\gamma_3$ and sends $\gamma_3$ to $\gamma_1+\gamma_2+\gamma_3$, so
the return vector for any patch containing a well-defined collared supertile
is a combination of $\gamma_1+\gamma_2$ and $\gamma_3$. The quantities $\gamma_1+\gamma_2$ and $\gamma_3$ 
transform under additional substitution via the matrix 
$\left ( \begin{smallmatrix} 0 & 1 \cr 2 & 1 \end{smallmatrix} \right )$. 
This is invertible over $\QQ$, so the rank of the return module for a high-order 
supertile is the same as the rank for an order-1 supertile. As a result, return 
modules of all large patches have maximal (and generic) rank 2. 
\end{example}

\begin{example}[Three-e Morse] Our final one-dimensional example is a generalization of Thue-Morse that we call Three-e-Morse. Consider the 
substitution $a \to aab$, $b \to bba$. This has features similar to both 
Thue-Morse and to the period-doubling substitution $a \to ab$, $b \to aa$, only
with a stretching factor of 3 instead of 2. 

Working with basic tiles, there are only two lengths to play with, so the
return module of a patch cannot have rank greater than 2. The substitution 
matrix $\left ( \begin{smallmatrix} 2 & 1 \cr 1 & 2 \end{smallmatrix} \right )$
is invertible over $\QQ$, so the rank of the return module of a high-order supertile (or of any patch found within a high-order supertile, in other words of 
any patch at all) has the same rank as the return module of a simple tile. 
This rank is 1 if $|a|/|b| \in \QQ$ and 2 if $|a|/|b| \not \in \QQ$. 

However, the rank of $\check H^1(\Omega) = \ZZ[1/3] \oplus \ZZ^2$ is 3, not 2, 
so a generic shape change should give us rank-3 return modules. As with the 
Thue-Morse tiling, part of the 
cohomology is invisible to uncollared tiles, so we need to use collared tiles 
to achieve this. There are now eight collared tiles: 
\begin{eqnarray*}
a_1 = (a)a(a), &\qquad& b_1 = (b)b(b), \cr 
a_2 = (a)a(b), &\qquad & b_2 = (b)b(a), \cr 
a_3  =  (b)a(a), &\qquad & b_3  =  (a)b(b), \cr 
a_4  =  (b)a(b), &\qquad & b_4  =  (a)b(a).  
\end{eqnarray*}
\begin{figure}[ht]
    \centering
    \includegraphics[width=0.2\textwidth]{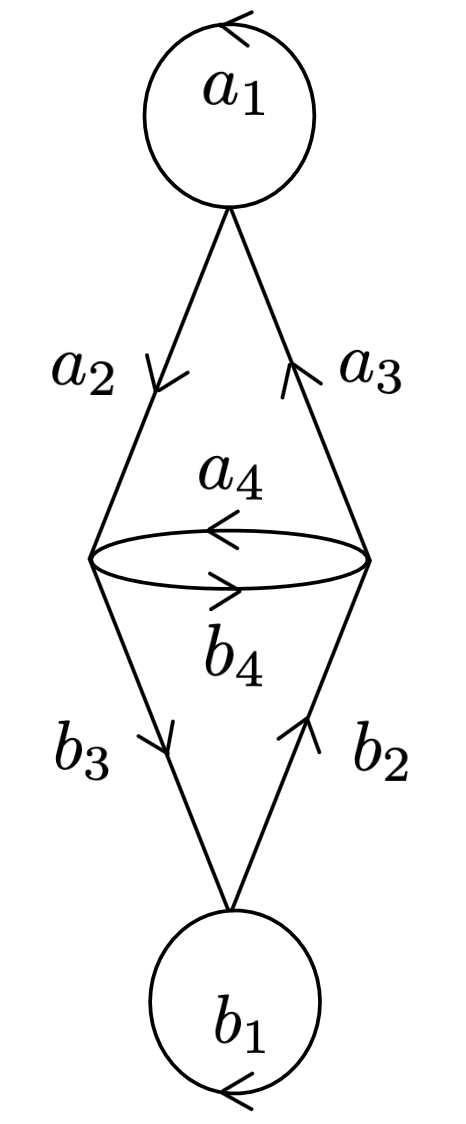}
    \caption{The Anderson-Putnam complex for the once-collared Three-e Morse tiling}
    \label{fig:threeemorse}
\end{figure}
The resulting Anderson-Putnam complex is shown in Figure \ref{fig:threeemorse}.

We define the quantities 
\begin{eqnarray*} 
\gamma_1 &=& |a_1|, \cr
\gamma_2 & = & |b_1|, \cr 
\gamma_3 &=& |a_2| + |a_3| - |a_4| \cr 
\gamma_4 &=& |b_3| + |b_3| - |b_4| \cr 
\gamma_5 & = & |a_4| + |b_4|. 
\end{eqnarray*} 
By picking generic tile lengths, we can get the $\gamma_i$'s to be rationally 
independent, so the rank of the return module of a single tile may have rank up to 5. 
Under substitution, the quantities $\gamma_1, \ldots, \gamma_5$ transform via the matrix 
\[ \begin{pmatrix} 0&0&0&0&1 \cr 0&0&0&0&1 \cr 1 &0&1&0&1 \cr 
0 & 1 & 0 & 1 & 1 \cr 1&1&1&1&1 \end{pmatrix}. \]
This matrix has rank 3, with eigenvalues 3, 1, -1, 0 and 0. Thus the return module
for order-1 supertiles (or for higher-order supertiles, or for any large patch)
will generically have rank 3. If the $\gamma_i$'s are not rationally independent, the
rank may be even less. 
\end{example} 

The lesson in all of these examples is that the rank of the cohomology gives 
an upper bound on the ranks of return modules for large patches and gives the
rank of such return modules exactly for a generic choice of tile lengths. If the 
cohomology can be expressed entirely using uncollared tiles, as in the Fibonacci
tiling and other Sturmian tilings, then we only need to vary the lengths of the 
uncollared tiles to achieve this. If part of the cohomology can only be expressed
using collared tiles, as in the Thue-Morse and Three-e-Morse tilings, then
we need to vary the lengths of collared tiles to achieve generic behavior. 
In all cases we can get return modules of small patches to be as complicated as 
we want by collaring and varying a large number of tile lengths. However, these
complications disappear when we look at large patches, typically by having a 
singular substitution matrix. 

\subsection{Higher dimensional examples} 

When it comes to return modules, there are several key 
differences between the geometry of $\RR$ and that of $\RR^d$ with
$d>1$. First, the return module of a repetitive tiling in $d$ 
dimensions always has rank at least $d$. Since every ball of 
sufficient radius contains a copy of the patch $P$ that we 
are studying, the real span of the return vectors of $P$ is all 
of $\RR^d$, so there must be $d$ return vectors that are linearly
independent over $\RR$, and thus over $\QQ$. 

Second, non-generic behavior is rarer in higher dimensions than
in one dimension, in the sense that it occurs on countably many copies of a codimension-$d$ set
instead of a codimension-1 set. (However, the word ``rarer'' should be taken with a grain of salt, since in both cases the sets have measure zero.) 
There are only countably many ways to 
have a rational linear relation among return vectors. Each such relation
reduces the number of free vectors by one, thereby constraining us to a 
codimension-$d$ subset of our set of possible shape parameters. 

Finally, the maximal rank of a return module is the rank of $\check H^1$, but
the dimension of the space of shape parameters is $d$ times the rank of 
$\check H^1$. There are many, many more ways to vary the shapes and sizes of 
our tiles than there are generators of our return modules.

\begin{example}[Chair] There are two common versions of the chair tiling, each MLD
to the other. In one, the basic tiles are an L-shaped triomino and rotations of 
that triomino by multiples of 90 degrees. The substitution rule for one tile is 
given in the top portion of Figure \ref{fig:chairarrow}. To substitute rotated versions of the tile, just 
rotate the picture. In the other version of the chair tiling, a basic tile is a 
unit square marked with an arrow pointing northeast, northwest, southwest, or 
southeast. The substitution rule for arrow tiles is shown in the bottom portion of Figure \ref{fig:chairarrow}. To go from chairs to arrows, divide each chair-shaped tile into three squares and draw arrows pointing towards the center. 
To go from arrows to chairs, look for vertices with three arrows pointing in and
one pointing out. Glue the three squares with arrows pointing in to form a chair.

\begin{figure}[ht]
    \centering
    \includegraphics[width=0.4\textwidth]{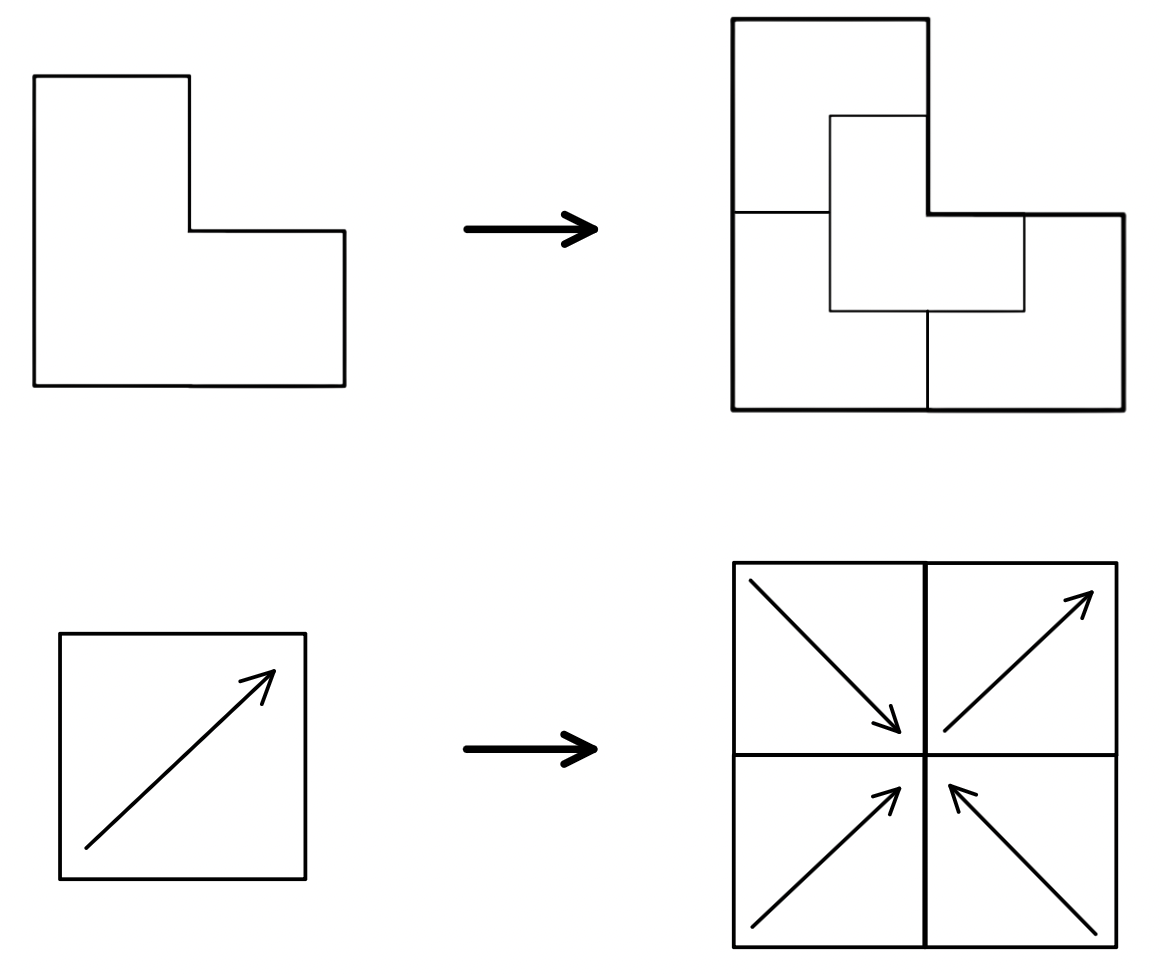}
    \caption{Substitutions for the classic and arrow versions of the chair tiling}
    \label{fig:chairarrow}
\end{figure}

In either version, all of the vertices have exactly the same $x$-coordinate (mod 1)
and the same $y$-coordinate (mod 1). The return module for any patch is necessarily
a subgroup of $\ZZ^2$ and so has rank at most 2. (Depending on 
the size of the patch, the return module is typically $2^n \ZZ^2$ for some 
integer $n$.) 
The chair tiling, like all primitive substitution tilings, is {\em repetitive}, 
meaning that for each patch $P$ there is a radius $r_P$ such that every ball of 
radius $r_P$ contains at least one copy of $P$. This implies that the rank of 
$\Ret(P)$ is at least equal to the dimension of the ambient space, in this case 2. 
Remarkably, it is no bigger. 

It is reasonable to wonder whether that rank can be increased by applying a shape
change. For small patches the answer is ``yes'', just as in our 1D examples. 
But for large patches the answer is ``no''. $\check H^1(\Omega) = \ZZ[1/2]^2$, 
which only has rank 2. By Theorem \ref{thm:main1}, no matter how we change the shapes, the return modules of 
large patches can only have rank 2. That is, the locations of an arbitrary large 
patch $P$ must live on a lattice, up to an overall translation. 

Another perspective on this comes from the classification of shape changes up
to MLD equivalence by $\check H^1(\Omega, \RR^2)$ \cite{CS1}. There is a 4-parameter
family of rigid linear transformations that can be applied to the chair tiling.
Since $\check H^1(\Omega, \RR^2) = \ZZ[1/2]^2 \otimes \RR^2 = \RR^4$ is 
4-dimensional, all shape changes are MLD to rigid linear transformations, and in
particular respect any lattice structure on any length scale larger than that 
of the MLD equivalence. 
\end{example}

\begin{example}[Hat] 
In 2023, Smith et al \cite{Hat} announced the discovery of a family of 
aperiodic monotiles. Each element in the family is a shape that, together 
with rotated and reflected versions of itself, can tile the plane but
only nonperiodically. Each tiling actually involves 12 tiles up 
to translation: the original shape rotated by multiples of 60 
degrees and the reflected shape rotated by multiples of 60 degrees. 
(The same authors later displayed an aperiodic monotile called 
the Spectre \cite{Spectre} that uses 12 rotations and no reflections. We will not
concern ourselves with the Spectre tilings here.) 

\begin{figure}[ht]
    \centering
    \includegraphics[width=0.4\textwidth]{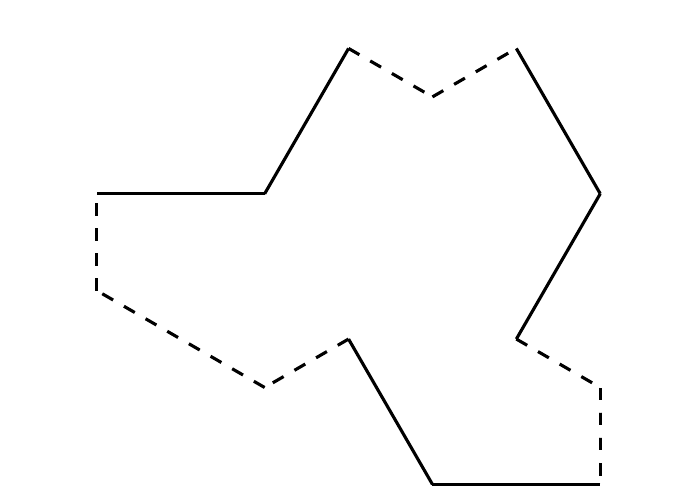}
    \caption{The basic ``Hat'' tile. The lengths $\alpha, \beta$
    of the solid and dashed edges are arbitrary.}
    \label{fig:Hat-shape}
\end{figure}

The basic tile, shown in Figure \ref{fig:Hat-shape}, 
is a degenerate 14-gon, with six edges of length 
$\alpha$ (shown as solid lines) and eight edges of length $\beta$ (shown as dashed lines), and with 
two of the dashed edges laid end-to-end, looking like a single edge of 
length $2\beta$.
The construction works for all positive values of 
$\alpha$ and $\beta$, although some care must be taken with the
special case $\alpha=\beta$ (to ensure that $\alpha$ edges cannot 
abut $\beta$ edges) and with the limiting cases $\alpha=0$ and 
$\beta=0$ (to ensure that the zero-length edges still line up).
Several special cases have been given names: $\beta=0$ 
is a ``Chevron'', $\alpha/\beta=\sqrt{3}$ is a ``Hat'', $\alpha=\beta$
is a ``Spectre'', $\alpha/\beta=\sqrt{3}/3$ is a ``Turtle'' and 
$\alpha=0$ is a ``Comet''. See Figure \ref{fig:Hat-family}

\begin{figure}[ht]
\begin{center}
\includegraphics[angle=180, origin=c, width=0.2\textwidth]{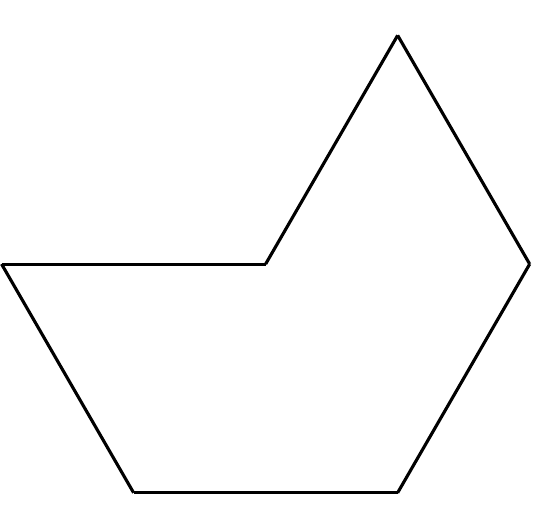} \qquad 
\includegraphics[angle=180, origin=c,width=0.3\textwidth]{HatBW.pdf}
\includegraphics[angle=180, origin=c,width=0.3\textwidth]{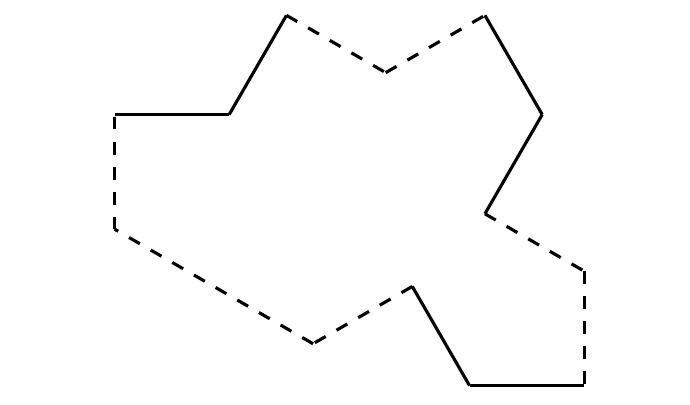} \\
\includegraphics[angle=180, origin=c,width=0.2\textwidth]{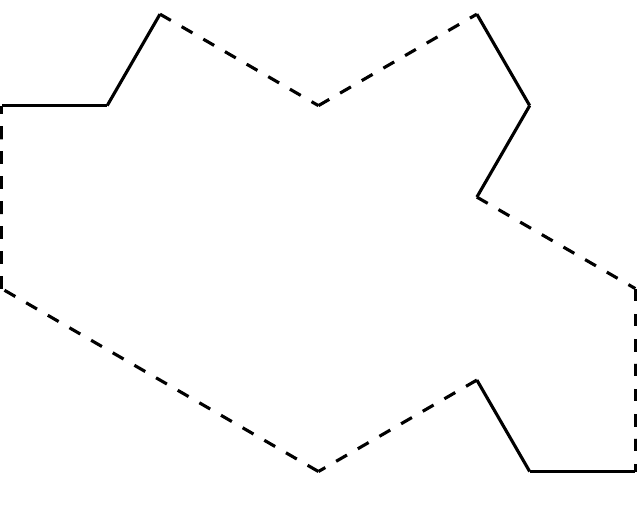}
\hskip 0.1 \textwidth
\includegraphics[angle=180, origin=c,width=0.2\textwidth]{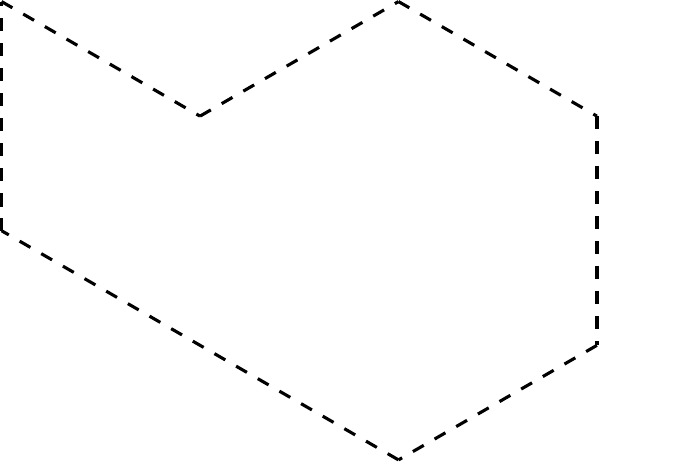}
\end{center}
\caption{The Chevron, Hat, Spectre, Turtle and Comet tiles, all rotated
by 180 degrees.}
\label{fig:Hat-family}
\end{figure}

In all cases, tiles must assemble into four shapes called ``metatiles''. The metatiles then assemble into larger metatiles,
which assemble into larger metatiles, and so on to infinity. 
The substitution involving metatiles has an overall stretching
factor of $\phi^4$, where $\phi$ is the golden mean. Since 
$\phi^4$ is a {\em unimodular} Pisot number, this implies that 
the return module for high-order metatiles (and other large patches) 
is the same as for basic metatiles, just like what we saw with
the Fibonacci tiling. We refer to this as the return module of 
the tiling. While it is possible to investigate the (possibly larger)
return module for individual tiles, we will not do so. 

The cohomology and return modules for tilings in the Hat family
were computed in \cite{BGS}. $\check H^1(\Omega)=\ZZ^4$, so the biggest rank the 
return module can have is 4. Our ambient space is 2-dimensional, so the return
module must have rank at least 2. 6-fold rotational symmetry implies that 
the rank of the return module must be even. In short, the rank must always be 
2 or 4. 

If we identify $\RR^2$ 
with the complex numbers $\CC$, then the return module is closely related to 
the triangular lattice $\ZZ[\xi]$ spanned by 1 and $\xi = \exp(2\pi i/6)
= (1 + i \sqrt{3})/2$. Specifically, the return lattice is the span of 
$(\alpha+i\beta) (1+\xi) \ZZ[\xi]$ and $2i\beta (1+\xi) \ZZ[\xi]$. This has 
rank 2 if a rational multiple of $2i\beta/\alpha$ lies in $\ZZ[\xi]$ and rank
4 if it does not. Since the pure imaginary elements of $\ZZ[\xi]$ are the 
multiples of $i \sqrt{3}$, the return module has rank 2 if and only if 
$\beta \sqrt{3}/\alpha$ is rational or $\alpha=0$. 

The upshot is that almost every shape in the Hat family results in a 4-dimensional
return module. However, four of the five named variants have 2-dimensional 
return modules. In two of these, namely the Chevron and the Comet, all vertices 
lie on a triangular lattice, so the return module is necessarily contained in 
that lattice. In two others, namely the Hat and the Turtle, all vertices either 
lie on a triangular lattice or on its dual honeycomb grid.

So far we have only considered the shapes discussed in \cite{Hat}, all of 
which respect 6-fold rotational symmetry and reflectional symmetry. In \cite{BGS}, 
the authors considered shape changes that broke reflectional symmetry while
preserving rotational symmetry, resulting in two distinct shapes, each appearing
in 6 orientations. This was done by allowing the parameters $\alpha$ and $\beta$ 
to be complex. As before, the return module has rank 2 exactly when 
$i \beta/\alpha \in \QQ[\xi]$, the set of rational multiples of $\ZZ[\xi]$. 
The set $\QQ[\xi]$ is countable and dense in $\CC$. Our space of shape 
parameters $(\alpha, \beta)$ is $\CC^2 =\RR^4$, most of which give rise to 
rank-4 return modules. Only when $(\alpha, \beta)$ lie in a countable union of 
2-dimensional subsets do we get rank 2. 

Expanding our horizons further, we can consider shape changes that break
6-fold rotational symmetry. The space of possible shapes (up to MLD equivalence) 
is parametrized by $\check H^1(\Omega,\RR^2) = \ZZ^4 \otimes \RR^2 = \RR^8$. 
Within this 8-dimensional space, the return module has rank 4 everywhere except on a 
countable union of 6-dimensional subsets, where it can have rank 2 or 3. 
\end{example}

\section*{Acknowledgements}

It is our pleasure to thank Michael Baake, Franz G\"ahler and 
Jianlong Liu for 
useful discussions. This work was supported by the National Science Foundation under grant DMS-2113468. \bigskip

\end{document}